\newtheorem{theorem}{Theorem}
\newtheorem{corollary}[theorem]{Corollary}
\newtheorem{problem}{Problem}
\newtheorem{proposition}[theorem]{Proposition}
\newenvironment{proof}[1][Proof]{\noindent\textbf{#1.} }{\ \rule{0.5em}{0.5em}}
\begin{document}

\title{Universal Taylor series, conformal mappings and boundary behaviour}
\date{}
\author{Stephen J. Gardiner}
\maketitle

\begin{abstract}
A holomorphic function $f$ on a simply connected domain $\Omega $ is said to
possess a universal Taylor series about a point in $\Omega $ if the partial
sums of that series approximate arbitrary polynomials on arbitrary compacta $%
K$ outside $\Omega $ (provided only that $K$ has connected complement). This
paper shows that this property is not conformally invariant, and, in the
case where $\Omega $ is the unit disc, that such functions have extreme
angular boundary behaviour.
\end{abstract}

\section{Introduction\protect\footnotetext{%
\noindent 2010 \textit{Mathematics Subject Classification }30K05, 30B30,
30E10, 31A05. \newline
This research was supported by Science Foundation Ireland under Grant
09/RFP/MTH2149.}}

Let $f$ be a holomorphic function on a simply connected proper subdomain $%
\Omega $ of the complex plane $\mathbb{C}$, let $\xi \in \Omega $ and $%
S_{N}(f,\xi )(z)$ denote the partial sum $\sum_{n=0}^{N}a_{n}(z-\xi )^{n}$
of the Taylor series of $f$\ about $\xi $. We call this series \textit{%
universal} and write $f\in \mathcal{U}(\Omega ,\xi )$\textit{\ }if, for
every compact set $K\subset \mathbb{C}\backslash \Omega $ that has connected
complement and every continuous function $g:K\rightarrow \mathbb{C}$ that is
holomorphic on $K^{\circ }$, there is a subsequence $(S_{N_{k}}(f,\xi ))$
that converges uniformly to $g$ on $K$. It is known that possession of such
universal expansions is a generic property of holomorphic functions on
simply connected domains (that is, $\mathcal{U}(\Omega ,\xi )$ is a dense $%
G_{\delta }$ subset of the space of all holomorphic functions on $\Omega $
endowed with the topology of local uniform convergence \cite{Ne96}, \cite%
{Ne97}) and that the collection $\mathcal{U}(\Omega ,\xi )$ is independent
of the choice of the centre of expansion $\xi $ (see \cite{MN}, \cite{MVY}).

However, significant questions remain open. A fundamental issue concerns
conformal invariance:

\begin{problem}
\label{P1}If $F:\Omega _{0}\rightarrow \Omega $ is a conformal mapping,
where $\Omega _{0}$ and $\Omega $ are simply connected domains, and if $f\in 
\mathcal{U}(\Omega ,\xi )$, does it follow that $f\circ F\in \mathcal{U}%
(\Omega _{0},F^{-1}(\xi ))$?
\end{problem}

Another question concerns boundary behaviour, about which there is a growing
literature \cite{Ne97}, \cite{CM}, \cite{Me00}, \cite{MN}, \cite{Cos05}, 
\cite{MVY}, \cite{AC}, \cite{BBCP}, \cite{G12}. For example, in the case of
the unit disc $\mathbb{D}$, it is known that if $f\in \mathcal{U}(\mathbb{D}%
,0)$, then $f$ does not belong to the Nevanlinna class (see \cite{MNP}) and
there is a residual subset $Z$ of the unit circle $\mathbb{T}$ such that the
set $\{f(r\zeta ):0<r<1\}$\textit{\ }is unbounded for every $\zeta \in Z$
(see \cite{Bay}). However, little progress has yet been made on the natural
question:

\begin{problem}
\label{P2}What can be said about the angular boundary behaviour of functions
in $\mathcal{U}(\mathbb{D},0)$?
\end{problem}

I am grateful to Vassili Nestoridis for alerting me to the fact that Problem %
\ref{P1} had remained unresolved, and to George Costakis for drawing my
attention to Problem \ref{P2}. The answers are given below. Let $S$ denote
the strip $\{z\in \mathbb{C}:-1<\func{Re}z<1\}$.

\begin{theorem}
\label{con}There is a function $f\in \mathcal{U}(S,0)$ with the following
properties:\newline
(i) for any conformal mapping $F:\mathbb{D}\rightarrow S$ we have $f\circ
F\notin \mathcal{U}(\mathbb{D},F^{-1}(0))$;\newline
(ii) there exist conformal mappings $F:S\rightarrow S$ such that $f\circ
F\notin \mathcal{U}(S,F^{-1}(0))$.
\end{theorem}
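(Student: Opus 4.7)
The strategy is to produce a single function $f\in\mathcal{U}(S,0)$ that is bounded on $S$, and to derive (i) from the Melas--Nestoridis--Papadimitrakis theorem while handling (ii) via a suitable non-trivial conformal self-map of $S$.

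First I would construct $f\in H^{\infty }(S)\cap\mathcal{U}(S,0)$ by an iterative Runge-type procedure. Enumerate the countable data of triples $(K_{j},P_{j},\varepsilon _{j})$, where $K_{j}\subset \mathbb{C}\setminus S$ is a compactum with connected complement, $P_{j}$ a polynomial and $\varepsilon _{j}>0$. Inductively choose holomorphic corrections $h_{j}$ (for instance rational functions whose poles lie in $\mathbb{C}\setminus S$ at large imaginary part), adjusting the running Taylor expansion at $0$ so that some partial sum approximates $P_{j}$ on $K_{j}$ within $\varepsilon _{j}$, while keeping the supremum norm on $S$ bounded by a fixed constant $M$. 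The crucial freedom is that the two components of $\mathbb{C}\setminus S$ are unbounded half-planes: corrections with poles pushed off to infinity in those half-planes contribute arbitrarily small amounts to the $H^{\infty }(S)$-norm, which decouples the sup-norm control on $S$ from the polynomial approximation task outside $S$.

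For (i), once such $f$ is in hand, any conformal $F:\mathbb{D}\rightarrow S$ yields $f\circ F\in H^{\infty }(\mathbb{D})\subset N(\mathbb{D})$. The theorem of \cite{MNP} asserts that $\mathcal{U}(\mathbb{D},\zeta )\cap N(\mathbb{D})=\emptyset $ for every $\zeta \in \mathbb{D}$, and therefore $f\circ F\notin \mathcal{U}(\mathbb{D},F^{-1}(0))$.

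For (ii), a direct change-of-variables argument shows that the translations and reflections $z\mapsto \pm z+iy$ already preserve $\mathcal{U}(S,0)$ (they move a test compactum $K\subset \mathbb{C}\setminus S$ to another such compactum and transport the partial sums verbatim), so the self-map $F$ we want cannot come from that family. I would pick $F=\Phi \circ M\circ \Phi ^{-1}$ with $\Phi :\mathbb{D}\rightarrow S$ conformal and $M$ a non-elliptic M\"obius automorphism of $\mathbb{D}$, chosen so that $F$ sends the prime end of $S$ at $+\infty $ to a finite boundary point $p\in \partial S$. Near $p$ the function $f\circ F$ inherits the bounded (in fact asymptotically tame) behaviour of $f$ near $+\infty $; taking the test compactum $K\subset \mathbb{C}\setminus S$ to be close to $p$ and the target function to have large oscillation on $K$, one shows via prime-end estimates that the partial sums of $f\circ F$ centred at $F^{-1}(0)$ cannot realise the required approximation.

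The principal obstacle is the first step: producing a universal Taylor series on $S$ which is simultaneously bounded there. Baire category applied to $H(S)$ with the compact-open topology yields $\mathcal{U}(S,0)$ as a dense $G_{\delta }$, but the generic representative is unbounded. The explicit iterative construction requires careful bookkeeping so that the tail of the series $\sum h_{j}$ neither destroys the sup-norm bound on $S$ nor spoils the partial-sum approximations already arranged on $K_{1},\dots ,K_{j-1}$. Step~(ii) is also delicate: translating a structural property of $f$ (boundedness together with controlled behaviour at the prime end at $+\infty $) into a concrete obstruction to partial-sum approximation on a specific compactum near $p$ demands careful control of the conformal correspondence between a neighbourhood of $+\infty $ in $S$ and a neighbourhood of $p$ in $S$ under $F$.
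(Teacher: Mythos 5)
The plan hinges entirely on the first step, namely the construction of a bounded universal Taylor series on $S$, and that step is not merely unproven but in fact impossible. The paper's argument (the adaptation of Theorem~\ref{two} and Corollary~\ref{cor} to the strip, invoked in the proof of part (ii)) shows that for $f_1\in\mathcal{U}(S,0)$ and any positive harmonic $h$ on $S$, the set $\{|f_1|\ge e^h\}$ cannot be minimally thin at two distinct points of $\{\operatorname{Re} z=1\}$. If some $f\in\mathcal{U}(S,0)$ were bounded by $M$, then taking $h\equiv\log(2M)$ would make $\{|f|\ge e^h\}=\emptyset$, which is minimally thin at every boundary point; contradiction. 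So $H^\infty(S)\cap\mathcal{U}(S,0)=\emptyset$. Your construction sketch also contains an internal confusion: corrections with poles pushed far up the two half-planes have small $H^\infty(S)$-norm, but for precisely that reason they are useless for approximating an arbitrary polynomial on a compactum $K_j$ sitting close to the strip (e.g.\ $K_j=\overline{D}(2,1/2)$). Mergelyan/Runge gives corrections small on bounded subsets of $S$, not on all of the unbounded $S$, and the paper's Proposition~\ref{ex2} exploits exactly this: the constructed $f$ is dominated by $e^h$ only on a bounded set $A$ with $\overline{A}\cap\partial S=\{\pm1\}$ (chosen so that $S\setminus A$ is minimally thin at $\pm1$), not on all of $S$.

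This also makes your route to (i) and (ii) diverge fundamentally from the paper's. The paper deduces (i) from Corollary~\ref{cor}: under $F^{-1}$ the two points $\pm1$ go to two distinct points of $\mathbb{T}$ at which $\{|f\circ F|\ge e^{h\circ F}\}$ would be minimally thin, contradicting the ``at most one point'' conclusion. Your appeal to Melas--Nestoridis--Papadoperakis would only apply if $f\circ F$ were in the Nevanlinna class, which you cannot ensure once the $H^\infty(S)$ claim fails. For (ii), your suggestion to send the prime end at $+i\infty$ to a finite boundary point and argue via ``prime-end estimates'' is not an argument; nothing ties the tame behaviour of $f$ near $+i\infty$ to an obstruction for the partial sums of $f\circ F$ on a compactum near $p$. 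The paper instead chooses a self-map of $S$ sending two points of $\{\operatorname{Re} z=1\}$ to $\pm1$ and invokes the $S$-version of Corollary~\ref{cor} again, a concrete and self-contained mechanism. In short: the key idea you are missing is to replace global boundedness (impossible) by growth control $|f|\le e^h$ on a set whose complement is minimally thin at the two distinguished boundary points, together with the quantitative thinness obstruction of Corollary~\ref{cor}.
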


We define angular approach regions at a point $\zeta \in \mathbb{T}$ by%
\begin{equation*}
\Gamma _{\alpha }^{t}(\zeta )=\{z:\left\vert z-\zeta \right\vert <\alpha
(1-\left\vert z\right\vert )<\alpha t\}\text{ \ \ \ }(\alpha >1,0<t\leq 1).
\end{equation*}%
A boundary point $\zeta $ is called a \textit{Fatou point }of a holomorphic
function $f$ on $\mathbb{D}$\ if $\lim_{z\rightarrow \zeta ,z\in \Gamma
_{\alpha }^{1}(\zeta )}f(z)$ exists finitely for all $\alpha $. At the
opposite extreme, $\zeta $ is called a \textit{Plessner point }of $f$ if $%
f(\Gamma _{\alpha }^{t}(\zeta ))$ is dense in $\mathbb{C}$ for all $\alpha $
and $t$. Plessner's theorem says that, for any holomorphic function $f$ on $%
\mathbb{D}$, almost every point of $\mathbb{T}$ is either a Fatou point or a
Plessner point of $f$ (see Theorem 6.13 in \cite{Pomm}). Our next result
shows that universal Taylor series have extreme angular boundary behaviour.

\begin{theorem}
\label{one}If $f\in \mathcal{U}(\mathbb{D},0)$, then almost every point of $%
\mathbb{T}$ is a Plessner point of $f$.
\end{theorem}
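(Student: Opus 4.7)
The plan is to reduce the statement, via Plessner's theorem, to showing that the Fatou set $\mathrm{Fa}(f)$ of $f$ has Lebesgue measure zero, and then derive a contradiction from the universality property using a normal families argument that would effectively analytically continue $f$ across an arc of $\mathbb{T}$.

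First, by Plessner's theorem as stated above, almost every $\zeta\in\mathbb{T}$ is a Fatou point or a Plessner point of $f$; it therefore suffices to prove $|\mathrm{Fa}(f)|=0$. Suppose, for contradiction, that $|\mathrm{Fa}(f)|>0$. Since
\[
\mathrm{Fa}(f)=\bigcup_{\alpha,t,M\in\mathbb{Q}^{+}}\bigl\{\zeta\in\mathbb{T}:|f|\le M\text{ on }\Gamma_{\alpha}^{t}(\zeta)\bigr\},
\]
and each set in the union is closed, a countable union argument and inner regularity of Lebesgue measure produce a compact $E'\subset\mathbb{T}$ with $|E'|>0$ and fixed constants $\alpha>1$, $t,M>0$ such that $|f|\le M$ on the sawtooth region $G:=\bigcup_{\zeta\in E'}\Gamma_{\alpha}^{t}(\zeta)$. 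Fix a Lebesgue density point $\zeta_{0}$ of $E'$; a standard geometric/density estimate then yields $\beta<\alpha$ and $s>0$ with $\Gamma_{\beta}^{s}(\zeta_{0})\subset G$, so that in particular $|f|\le M$ on $\Gamma_{\beta}^{s}(\zeta_{0})$. Choose a compact arc $K\subset\mathbb{T}$ with $\zeta_{0}\in K^{\circ}$, $|K\cap E'|>0$, and $\mathbb{C}\setminus K$ connected.

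By the hypothesis $f\in\mathcal{U}(\mathbb{D},0)$, for any continuous $g:K\to\mathbb{C}$ there is a subsequence $F_{k}:=S_{N_{k}}(f,0)$ converging to $g$ uniformly on $K$. My aim is to show that this forces $g(\zeta)=f^{*}(\zeta)$ (the angular limit of $f$) at almost every $\zeta\in K\cap E'$; since $|K\cap E'|>0$, picking $g$ incompatible with the measurable function $f^{*}|_{K\cap E'}$ (for instance a constant outside its essential range) will contradict universality. To establish the coincidence $g=f^{*}$ a.e.\ on $K\cap E'$, I would run a normal families / Vitali argument on a small open set $V$ straddling $\mathbb{T}$ at $\zeta_{0}$: on $V\cap K$ the polynomials $F_{k}$ are uniformly bounded because $F_{k}\to g$ uniformly on $K$; on compacta of $V\cap\Gamma_{\beta}^{s}(\zeta_{0})$ they are uniformly bounded because $F_{k}\to f$ locally uniformly on $\mathbb{D}$ and $|f|\le M$ there. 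Once such a uniform bound is secured on $V$, Vitali's theorem gives a subsequential holomorphic limit $h$ on $V$ satisfying $h=f$ on $V\cap\mathbb{D}$ and $h=g$ on $V\cap K$, so continuity of $h$ forces $g(\zeta_{0})=h(\zeta_{0})=f^{*}(\zeta_{0})$; repeating the argument at every density point of $K\cap E'$ yields the required a.e.\ identification.

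The principal obstacle will be the uniform boundedness of $\{F_{k}\}$ on the exterior portion $V\setminus\overline{\mathbb{D}}$: the polynomials have no automatic bound there coming from either their limit on $K$ or their limit inside $\mathbb{D}$. I would try to propagate the bound across $\mathbb{T}$ by a two-constants / harmonic-measure estimate on a lens-shaped subregion of $V$, using $|F_{k}|\le\|g\|_{\infty}+1$ on $K$ against a subharmonic majorant of $\log|F_{k}|$ coming from the interior bound. Should an exterior bound prove unattainable, an alternative route is to run the Vitali argument only on $V\cap\overline{\mathbb{D}}$ and prove that $f$ admits a continuous extension to $K\cap V$ with boundary values $g$, which still produces the desired constraint $g=f^{*}$ a.e.\ on $K\cap E'$ and hence the contradiction.
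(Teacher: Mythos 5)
Your overall plan (reduce via Plessner's theorem to showing the Fatou set is null, then contradict universality by forcing $S_{N_k}\to g$ on an arc to coincide with angular boundary values of $f$) heads in the same general direction as the paper, but it is built around a step that you yourself flag and that is in fact the whole difficulty of the theorem; the paper develops genuinely new machinery to get past it, and without something like that machinery the argument collapses.

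The gap is in \textbf{both} boundedness claims for the partial sums, not just the exterior one. On a compact subset of $V\cap\Gamma_\beta^s(\zeta_0)$ you do have $|S_{N_k}|\le M+1$ eventually, but the constant implicit in ``eventually'' degenerates as the compacta approach $\zeta_0$: local uniform convergence of $S_{N_k}$ to $f$ does not transfer the pointwise bound $|f|\le M$ to a bound on $S_{N_k}$ that is uniform up to the boundary point. Likewise the two-constants/Bernstein idea for the exterior piece gives $\log|S_{N_k}|\le N_k\,G_{\widehat{\mathbb C}\setminus\overline K}(\cdot,\infty)+O(1)$, which blows up as $N_k\to\infty$ at every point off $K$. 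So there is no elementary harmonic-measure estimate that yields a $k$-independent bound on $\{S_{N_k}\}$ in a full neighbourhood of $\zeta_0$, and without that neither the Vitali argument on $V$ nor the fallback on $V\cap\overline{\mathbb D}$ (Vitali needs an open domain in any case) can be run.

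The paper's route around this is Theorem~\ref{two}: instead of bounding $S_{N_k}$ directly, one works with the normalized subharmonic functions $u_k=\tfrac1{N_k}\log|S_{N_k}-f|$, which carry a zero of order $N_k$ at the origin and satisfy $\limsup u_k\le\log|z|$; combined with Bernstein's inequality on the arc $I$ and a minimal-fine-topology limit-measure argument, this cancels the $N_k$-growth and yields a bound on $S_{N_k}$ on $\mathbb D\setminus E$ with $E$ minimally thin at $\zeta_0$. Even then, the conclusion of Theorem~\ref{two} lives inside $\mathbb D$; the passage across $\mathbb T$ in the proof of Corollary~\ref{cor} needs \emph{two} distinct boundary points $\zeta_1,\zeta_2$ at which the exceptional set $E$ is minimally thin, so that Lelong-Ferrand's theorem supplies two line segments in $\mathbb D\setminus E$ bounding a domain $\Omega$ on which the maximum principle applies. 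Your argument is anchored at a single density point $\zeta_0$, and a one-point version cannot work: Proposition~\ref{ex} exhibits $f\in\mathcal U(\mathbb D,0)$ that is slowly growing on a set whose complement is minimally thin at one boundary point, so boundedness phenomena concentrated at a single point are compatible with universality. The proof of Theorem~\ref{one} circumvents this by observing that $\mathbb D\setminus F$ is minimally thin at \emph{almost every} point of the positive-measure set $J_a$, which hands Corollary~\ref{cor} the two points it needs. In short, the core estimate you acknowledge as ``the principal obstacle'' is precisely the content of Theorems~\ref{two} and Corollary~\ref{cor}, and the paper's mechanism for it (minimal thinness, the limit measure $\nu_0$, Lelong-Ferrand) is what your proposal is missing.
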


An easy consequence of Theorem \ref{one} is the following Baire category
analogue, which strengthens the result of Bayart mentioned earlier.

\begin{corollary}
\label{radial}If $f\in \mathcal{U}(\mathbb{D},0)$, then there is a residual
subset $Z$ of $\mathbb{T}$\textit{\ }such that $\{f(r\zeta ):0<r<1\}$\ is
dense in\textit{\ }$\mathbb{C}$ for every $\zeta \in Z$.
\end{corollary}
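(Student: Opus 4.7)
The plan is to combine Theorem \ref{one} with a Baire category argument applied to a natural countable family of open subsets of $\mathbb{T}$. Fix a countable dense set $\{w_{m}\}\subset \mathbb{C}$, and for positive integers $m,n$ put
\[
U_{m,n}=\{\zeta \in \mathbb{T}:f(r\zeta )\in B(w_{m},1/n)\text{ for some }r\in (0,1)\}.
\]
If each $U_{m,n}$ is open and dense in $\mathbb{T}$, then $Z=\bigcap_{m,n}U_{m,n}$ is a dense $G_{\delta}$, hence residual, and by construction $\{f(r\zeta ):0<r<1\}$ is dense in $\mathbb{C}$ for every $\zeta \in Z$.

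Openness of $U_{m,n}$ is immediate: if $\zeta _{0}\in U_{m,n}$ is witnessed by some $r_{0}\in (0,1)$ with $f(r_{0}\zeta _{0})\in B(w_{m},1/n)$, then by continuity of $f$ at the interior point $r_{0}\zeta _{0}$, the same inclusion holds for $f(r_{0}\zeta )$ whenever $\zeta \in \mathbb{T}$ is sufficiently close to $\zeta _{0}$.

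For density, let $A$ be any nonempty open arc of $\mathbb{T}$. By Theorem \ref{one}, the set of Plessner points of $f$ has full Lebesgue measure, so it is dense in $A$; pick such a point $\zeta _{0}\in A$. The key geometric remark is that, after fixing any $\alpha >1$, one may choose $t\in (0,1]$ small enough (depending on the arclength of $A$ and on $\alpha $) so that
\[
\Gamma _{\alpha }^{t}(\zeta _{0})\subset \{r\zeta :\zeta \in A,\ r\in (0,1)\},
\]
because points $z\in \Gamma _{\alpha }^{t}(\zeta _{0})$ satisfy $|z-\zeta _{0}|<\alpha t$ and $|z|>1-t$, which pins the argument of $z$ to a small neighbourhood of the argument of $\zeta _{0}$. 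Since $\zeta _{0}$ is a Plessner point, $f(\Gamma _{\alpha }^{t}(\zeta _{0}))$ is dense in $\mathbb{C}$, so there exists $z=r\zeta \in \Gamma _{\alpha }^{t}(\zeta _{0})$ with $f(z)\in B(w_{m},1/n)$. The corresponding $\zeta $ lies in $A$ and in $U_{m,n}$, so $A\cap U_{m,n}\neq \emptyset $.

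The only part needing any care is the geometric inclusion of $\Gamma _{\alpha }^{t}(\zeta _{0})$ in the radial sector over $A$, and this is a routine elementary estimate. Everything else is a direct Baire category packaging of the Plessner property supplied by Theorem \ref{one}.
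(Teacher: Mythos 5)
Your proof is correct and takes essentially the same route as the paper: both are Baire category arguments combining Theorem \ref{one} with the elementary geometric fact that an approach region $\Gamma _{\alpha }^{t}(\zeta _{0})$ lies inside the radial sector over a small arc around $\zeta _{0}$ once $t$ is small. The only cosmetic difference is that you verify directly that the sets $U_{m,n}$ are open and dense, whereas the paper works with the complementary closed sets $E_{p,q}$ and shows they are nowhere dense.
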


It turns out that the above solutions to Problems \ref{P1} and \ref{P2} both
emerge from the same non-trivial potential theoretic insight, which we will
now describe. The Poisson kernel for $\mathbb{D}$ is given by 
\begin{equation*}
P(z,\zeta )=\frac{1-\left\vert z\right\vert ^{2}}{\left\vert z-\zeta
\right\vert ^{2}}\text{ \ \ \ }(z\in \mathbb{D},\zeta \in \mathbb{T}).
\end{equation*}%
A set $E\subset \mathbb{D}$ is said to be \textit{minimally thin at a point }%
$\zeta \in \mathbb{T}$ if there is a (Green) potential $u$ on $\mathbb{D}$
such that $u\geq P(\cdot ,\zeta )$ on $E$. For example, if $D\subset \mathbb{%
D}$ is a disc that is internally tangent to $\mathbb{T}$ at a point $\zeta $%
, then $\mathbb{D}\backslash D$ is minimally thin at $\zeta $. This follows
from the facts that $D$ is of the form $\{P(\cdot ,\zeta )>c\}\ $for some $%
c>0$, and that $\min \{P(\cdot ,\zeta ),c\}$ is a potential on $\mathbb{D}$
since its greatest harmonic minorant is readily seen to be $0$.\ More
generally (see Theorem 2 in \cite{BD} and Theorem 9.5.5(iii) in \cite{AG}),
if $\psi :[0,1]\rightarrow \lbrack 0,1]$ is increasing, then the set $\{z\in 
\mathbb{D}:\func{Re}z>1-\psi (\left\vert \func{Im}z\right\vert )\}$ is
minimally thin at $1$ if and only if $\int_{0}^{1}t^{-2}\psi (t)dt<\infty $.
An introduction to the notion of minimal thinness may be found in Chapter 9
of the book \cite{AG}.

The key underlying result in this paper is as follows. We abbreviate $%
S_{N}(f,0)$ to $S_{N}$.

\begin{theorem}
\label{two}Let $f$ be a holomorphic function on $\mathbb{D}$ and $h$ be a
positive harmonic function on $\mathbb{D}$ such that the set $\{\left\vert
f\right\vert \geq e^{h}\}$ is minimally thin at $\zeta _{0}\in \mathbb{T}$.
If $(S_{N_{k}})$ is uniformly bounded on an open arc of $\mathbb{T}$ that
contains $\zeta _{0}$, then $(e^{-h}S_{N_{k}})$ is uniformly bounded on a
set of the form $\mathbb{D}\backslash E$, where $E\mathbb{\ }$is minimally
thin at $\zeta _{0}$. In the particular case where $h$ is constant, we can
thus conclude that $(S_{N_{k}})$ is uniformly bounded on $\mathbb{D}%
\backslash E$.
\end{theorem}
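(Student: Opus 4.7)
The strategy is a Phragm\'en--Lindel\"of argument in the minimal-fine topology, applied to the sequence of subharmonic functions $v_k := \log|S_{N_k}| - h$. The decomposition $S_{N_k} = f - R_{N_k}$ yields
\begin{equation*}
v_k \leq \log 2 + \max(\log|f| - h,\ \log|R_{N_k}| - h),
\end{equation*}
and since $\log|f|-h\leq 0$ off the minimally thin set $E_0 := \{|f|\geq e^h\}$, the problem reduces to bounding the subharmonic functions $w_k := \log|R_{N_k}|-h$ uniformly in $k$ on $\mathbb{D}\setminus E$ for some $E$ minimally thin at $\zeta_0$.

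For boundary control on $\mathbb{T}$: if $\zeta\in A$ and $z_n\to\zeta$ from within $\mathbb{D}\setminus E_0$, then $|R_{N_k}(z_n)|\leq |f(z_n)|+|S_{N_k}(z_n)|\leq e^{h(z_n)}+M$, which, combined with $h\geq 0$, gives the $k$-uniform bound $w_k(z_n)\leq\log(1+M)$. For interior control: on every compact set $K\subset\mathbb{D}$, absolute convergence of the Taylor series of $f$ provides a uniform bound $\sup_k\sup_K|S_{N_k}|<\infty$, and hence a uniform bound on $w_k$ on $K$ since $h$ is bounded there.

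To convert these two types of control into a uniform bound off a minimally thin set, I would localise to a horocyclic disc $D_0\subset\mathbb{D}$ internally tangent to $\mathbb{T}$ at $\zeta_0$; since $\mathbb{D}\setminus D_0$ is itself minimally thin at $\zeta_0$, it may be absorbed into $E$. The boundary $\partial D_0$ meets $\mathbb{T}$ only at $\zeta_0$, with $\partial D_0\setminus\{\zeta_0\}$ lying in compact subsets of $\mathbb{D}$ away from $\zeta_0$ and approaching $\zeta_0$ only tangentially. Applying a Phragm\'en--Lindel\"of principle for subharmonic functions with a minimally thin exceptional set (in the spirit of the material in Chapter 9 of \cite{AG}) propagates the uniform boundary bound into the interior of $D_0$, giving $w_k\leq C$ on $D_0\setminus E$ uniformly in $k$.

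The main obstacle is this last step: the exceptional set $E_0$ may cluster at $\zeta_0$ along the boundary $\partial D_0$, so the ordinary boundary maximum principle does not apply directly; one must employ the potential-theoretic Phragm\'en--Lindel\"of principle, in which the reduced function of $P(\cdot,\zeta_0)$ on $E_0$ (which is a potential by the minimal-thinness hypothesis) plays the role of a barrier absorbing the exceptional behaviour. The uniformity in $k$ is then automatic because the only $k$-dependent datum, the boundary estimate $\log(1+M)$, is in fact independent of $k$, so a single exceptional set $E$ works simultaneously for every $w_k$.
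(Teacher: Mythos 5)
Your strategy -- bound $\log|R_{N_k}| - h$ by a Phragm\'en--Lindel\"of argument in a horocyclic disc $D_0$ tangent at $\zeta_0$, absorbing the exceptional set into a minimally thin set -- is in the right spirit, but it contains a genuine gap exactly at the step you yourself flag as ``the main obstacle,'' and your closing claim that the uniformity in $k$ is ``automatic'' is where it breaks.

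The issue is what the boundary data for the Phragm\'en--Lindel\"of principle on $D_0$ actually are. Your only $k$-uniform bound, $w_k \le \log(1+M)$, is obtained at the arc $I\subset\mathbb{T}$ where $(S_{N_k})$ is uniformly bounded. But $\partial D_0$ lies inside $\mathbb{D}$ (except for the single point $\zeta_0$), not on $I$, so this bound is not usable as boundary data on $\partial D_0$. On $\partial D_0 \cap \mathbb{D}$ you appeal to local uniform convergence of $S_{N_k}\to f$ to get a bound on compacta; that does give a bound on any fixed compact piece of $\partial D_0$, but it degenerates as $z\to\zeta_0$ along $\partial D_0$, because the compacta must exhaust $\mathbb{D}$. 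In fact a $k$-uniform bound of the kind you need cannot hold there: by Bernstein's lemma, the sharp growth rate of a degree-$N_k$ polynomial bounded on $\overline{I}$ is $\log|S_{N_k}(z)| \le N_k\,G_{\widehat{\mathbb{C}}\setminus \overline{I}}(z,\infty) + O(1)$, and the Green function is bounded away from $0$ near $\partial D_0\cap\mathbb{D}$ away from $\overline{I}$; so $|S_{N_k}|$, and hence $|R_{N_k}|$, can genuinely grow like $e^{cN_k}$ there. The boundary estimate you would feed into the Phragm\'en--Lindel\"of therefore does depend on $k$, and it blows up, so the argument as sketched does not close.

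This is precisely what the $1/N_k$ normalization in the paper is for: one works instead with $u_k = \tfrac{1}{N_k}\log|S_{N_k}-f|$. The factor $1/N_k$ exactly cancels the degree-dependent growth coming from Bernstein's lemma, turning the bound into the $k$-independent $G_{\widehat{\mathbb{C}}\setminus\overline{I}}(\cdot,\infty) + o(1)$, while the fact that $S_{N_k}-f$ vanishes to order $N_k$ at the origin ensures $u_k(z)-\log|z|$ is still subharmonic and $\limsup_k u_k \le \log|z|$, giving usable interior control. Even with this normalization one still cannot simply run a boundary maximum principle past $\zeta_0$: the paper constructs a ``fine harmonic measure'' $\nu_0$ at $\zeta_0$ (a $w^*$-limit of suitably weighted harmonic measures along the minimal fine topology) and proves $\nu_0(\{\zeta_0\})=0$, which is what lets the negative contribution from the arc $I$ dominate. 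These two ingredients -- the $1/N_k$ rescaling combined with Bernstein's lemma, and the fine-harmonic-measure construction with $\nu_0(\{\zeta_0\})=0$ -- are the technical core of the proof and are absent from your proposal.

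(Two minor points: your $A$ in ``if $\zeta\in A$'' is undefined, presumably you meant the arc; and the reduction to $\log|R_{N_k}|-h$ needs the bound only near $\zeta_0$ off a minimally thin set, not on all of $\mathbb{D}\setminus E$, but that is easily fixed since $\mathbb{D}\setminus D(\zeta_0,r)$ is minimally thin at $\zeta_0$.)
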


\begin{corollary}
\label{cor}If $f\in \mathcal{U}(\mathbb{D},0)$ and $h$ is a positive
harmonic function on $\mathbb{D}$, then there is at most one point of $%
\mathbb{T}$ at which the set $\{\left\vert f\right\vert \geq e^{h}\}$ is
minimally thin.
\end{corollary}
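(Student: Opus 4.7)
The plan is to argue by contradiction, exploiting the universal property of $f$ twice and applying Theorem \ref{two} at each of the two putative thinness points. Suppose $\{|f|\ge e^{h}\}$ is minimally thin at distinct points $\zeta_{1},\zeta_{2}\in\mathbb{T}$. Choose disjoint closed arcs $I_{1},I_{2}\subset\mathbb{T}$ with $\zeta_{j}\in\operatorname{int}(I_{j})$ and set $K=I_{1}\cup I_{2}$. This $K$ is a compact subset of $\mathbb{C}\setminus\mathbb{D}$ with connected complement in $\mathbb{C}$, so by the defining property of $\mathcal{U}(\mathbb{D},0)$ there is a subsequence $(S_{N_{k}})$ converging uniformly to $0$ on $K$. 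In particular $(S_{N_{k}})$ is uniformly bounded on open arcs of $\mathbb{T}$ containing $\zeta_{1}$ and $\zeta_{2}$. Two applications of Theorem \ref{two} to this same subsequence, one at $\zeta_{1}$ and one at $\zeta_{2}$, produce sets $E_{1},E_{2}$, minimally thin at $\zeta_{1}$ and $\zeta_{2}$ respectively, such that $(e^{-h}S_{N_{k}})$ is uniformly bounded on each of $\mathbb{D}\setminus E_{1}$ and $\mathbb{D}\setminus E_{2}$, and therefore on $\mathbb{D}\setminus(E_{1}\cap E_{2})$ by some constant $C$.

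To extract the contradiction I invoke universality a second time on an enlarged compactum. Pick a point $\zeta_{3}\in\mathbb{T}\setminus(I_{1}\cup I_{2})$ at which the positive harmonic function $h$ has a finite nontangential boundary limit $h^{\ast}(\zeta_{3})$; such points form a set of full Lebesgue measure by Fatou's theorem for positive harmonic functions. Take a small closed arc $I_{3}\subset\mathbb{T}$ around $\zeta_{3}$ disjoint from $I_{1}\cup I_{2}$ and fix a large constant $M$ to be determined. Then $K\cup I_{3}$ is again compact with connected complement, so universality applied with target $0$ on $K$ and $M$ on $I_{3}$ furnishes a subsequence (still written $(S_{N_{k}})$) with $|S_{N_{k}}|\le 1$ on $I_{1}\cup I_{2}$ for large $k$ — so that the conclusion $|S_{N_{k}}|\le Ce^{h}$ on $\mathbb{D}\setminus(E_{1}\cap E_{2})$ still holds with $C$ independent of $M$ — and $|S_{N_{k}}|\ge M/2$ on $I_{3}$ for large $k$. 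Approaching a suitable $\zeta\in I_{3}$ from within $\mathbb{D}\setminus(E_{1}\cap E_{2})$ should yield $|S_{N_{k}}(\zeta)|\le Ce^{h^{\ast}(\zeta)}$, which for $M>2Ce^{h^{\ast}(\zeta_{3})}$ contradicts $|S_{N_{k}}|\ge M/2$.

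The main obstacle I anticipate is this final boundary-approach step. Minimal thinness of $E_{1}\cap E_{2}$ is guaranteed only at $\zeta_{1}$ and $\zeta_{2}$, so in principle $E_{1}\cap E_{2}$ could meet every nontangential cone at $\zeta_{3}$ and block approach from inside. Overcoming this will likely require either extra information about the structure of the exceptional set $E$ produced in the proof of Theorem \ref{two} — for example that $E$ may be arranged to lie in a fine tail at $\zeta_{0}$, in which case $E_{1}\cap E_{2}$ is small enough to permit nontangential approach to $\zeta_{3}$ — or else a more subtle potential-theoretic argument applied to the subharmonic function $\log|S_{N_{k}}|-h$, using that its upper bound on $\mathbb{D}\setminus(E_{1}\cap E_{2})$ extends across the thin obstruction by a suitable maximum principle.
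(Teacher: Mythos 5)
Your plan is a genuinely different route from the paper's, and unfortunately it cannot be repaired along the lines you propose.

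The crucial point you underestimate is that the exceptional set $E$ furnished by Theorem~\ref{two} is not merely minimally thin at $\zeta_{0}$ but, as the construction makes explicit (the last display of Section~2), actually contains $\mathbb{D}\setminus D(\zeta_{0},r_{1})$ for some $r_{1}>0$. Thus $\mathbb{D}\setminus E\subset D(\zeta_{0},r_{1})$: the region on which $(e^{-h}S_{N_{k}})$ is shown to be bounded is \emph{confined to a small neighbourhood of} $\zeta_{0}$. Applying the theorem at $\zeta_{1}$ and $\zeta_{2}$ therefore gives a bound on a set contained in $D(\zeta_{1},r^{(1)})\cup D(\zeta_{2},r^{(2)})$, which contains no points near any third boundary point $\zeta_{3}$. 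Your final step, ``approaching a suitable $\zeta\in I_{3}$ from within $\mathbb{D}\setminus(E_{1}\cap E_{2})$'', is therefore not merely obstructed by the possible shape of $E_{1}\cap E_{2}$ near $\zeta_{3}$; it is vacuous, because $\mathbb{D}\setminus(E_{1}\cap E_{2})$ cannot reach $\zeta_{3}$ at all. Your proposed fix (i), that $E$ could be arranged to lie in a fine tail at $\zeta_{0}$, would require $(e^{-h}S_{N_{k}})$ to be uniformly bounded throughout $\mathbb{D}$ away from $\zeta_{0}$, which is much stronger than Theorem~\ref{two} asserts and does not follow from its hypotheses.

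The paper avoids this difficulty by not trying to reach a third boundary point. It first modifies $h$ to a positive harmonic $h_{1}$ that vanishes continuously on a subarc $I_{1}$ lying \emph{between} $\zeta_{1}$ and $\zeta_{2}$, then uses universality to make $S_{N_{k}}\to 0$ uniformly on a thick arc $\{r\zeta:\zeta\in\overline{I},\,1\le r\le 2\}$ containing $\zeta_{1},\zeta_{2}$. After applying Theorem~\ref{two} at both points, it invokes a theorem of Lelong-Ferrand to select line segments $L_{1},L_{2}\subset\mathbb{D}\setminus E$ ending at $\zeta_{1},\zeta_{2}$, and applies the maximum principle to the subharmonic function $\log|S_{N_{k}}|-h_{1}$ on a domain $\Omega$ bounded by $L_{1},L_{2}$, the arc $I$ and a chord. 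This \emph{does} propagate the bound, but only into the region between $\zeta_{1}$ and $\zeta_{2}$, giving uniform boundedness of $(S_{N_{k}})$ on the full sector $\omega=\{r\zeta:\zeta\in I_{1},\,0<r<2\}$. The contradiction then comes not from comparing boundary values at a third point but from the fact that $S_{N_{k}}\to 0$ on the part of $\omega$ outside $\overline{\mathbb{D}}$, so by normality $S_{N_{k}}\to 0$ on all of $\omega$ and hence $f\equiv 0$. Your second proposed fix (a maximum-principle extension of the bound) is closer in spirit, but you would still need to (a) restrict the propagation to the region between the two thinness points, (b) exploit the modified harmonic majorant $h_{1}$ vanishing on $I_{1}$, and (c) obtain the contradiction from $S_{N_{k}}\to 0$ in a sector meeting $\mathbb{C}\setminus\overline{\mathbb{D}}$, not from a third-point boundary limit.
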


Bayart \cite{Bay} has shown that, if $f\in \mathcal{U}(\mathbb{D},0)$ and $%
a>0$, then there is at most one point $\zeta $ of $\mathbb{T}$ such that $%
\left\vert f\right\vert <a$ on a disc internally tangent to $\mathbb{T}$ at $%
\zeta $. Corollary \ref{cor} is a significantly stronger result, and this
extra strength is crucial for our purposes. The \textquotedblleft one
point\textquotedblright\ in Corollary \ref{cor} can actually arise. This
follows by choosing the set $A$ in the following result so that $\mathbb{D}%
\backslash A$ is minimally thin at $1$.

\begin{proposition}
\label{ex}Let $A\subset \mathbb{D}$, where $\overline{A}\cap \mathbb{T}%
=\{1\} $, and let $w:\mathbb{D}\rightarrow (1,\infty )$ be a continuous
function such that $w(z)\rightarrow \infty $ as $z\rightarrow 1$. Then there
exists $f\in \mathcal{U}(\mathbb{D},0)$ such that $\left\vert f\right\vert
\leq w$ on $A$. In particular, this is true for $w=e^{h}$, where $h$ is a
positive harmonic function on $\mathbb{D}$ that tends to $\infty $ at $1$.
\end{proposition}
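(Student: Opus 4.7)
The plan is to construct $f$ directly as a convergent series $f=\sum_{n\ge 1}Q_n$ of polynomials of disjoint Taylor supports, built inductively by Mergelyan's theorem. I enumerate a countable family $(\pi_n,\Lambda_n,2^{-n})_{n\ge 1}$ dense in the triples (polynomial, admissible compact in $\mathbb{C}\setminus\overline{\mathbb{D}}$, target accuracy), so that $f\in\mathcal{U}(\mathbb{D},0)$ follows once, for each $n$, the partial sum $S_n:=Q_1+\cdots+Q_n$ approximates $\pi_n$ within $2^{-n}$ on $\Lambda_n$. At stage $n$, with $S_{n-1}$ of degree $d_{n-1}$ in hand, I apply Mergelyan's theorem on $\overline{D_{r_n}}\cup\Lambda_n$ (whose complement is connected for $r_n<1$) with target $0$ on $\overline{D_{r_n}}$ and $\pi_n-S_{n-1}$ on $\Lambda_n$, and use the gap trick $Q_n(z)=z^{d_{n-1}+1}R_n(z)$ to produce a polynomial whose nonzero Taylor coefficients lie strictly above $d_{n-1}$ and which satisfies $\|Q_n\|_{\overline{D_{r_n}}}<2^{-n}$ and $\|S_n-\pi_n\|_{\Lambda_n}<2^{-n}$. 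This yields $S_{d_n}(f,0)=S_n$, so that $f\in\mathcal{U}(\mathbb{D},0)$ once $r_n\uparrow 1$ is guaranteed.

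To enforce $|f|\le w$ on $A$, I exploit the structural consequence of $\overline{A}\cap\mathbb{T}=\{1\}$: for each $r\in(0,1)$ the tail $A\cap\{|z|>r\}$ is contained in $\{z\in\mathbb{D}:|z-1|<\tau(r)\}$ for some $\tau(r)\to 0$ as $r\uparrow 1$, and so, combined with $w\to\infty$ at $1$, the quantity $W(r):=\inf\{w(z):z\in A,|z|>r\}$ tends to $\infty$ as $r\uparrow 1$. Setting $M_n:=\|Q_n\|_{\overline{\mathbb{D}}}$ (a finite number once $Q_n$ is in hand), I arrange the inductive schedule so that $W(r_n)\ge M_1+\cdots+M_n+2$ for every $n$. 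Then for $z\in A$ in the shell $\overline{D_{r_N}}\setminus\overline{D_{r_{N-1}}}$,
\[
|f(z)|\ \le\ \sum_{k<N}|Q_k(z)|+\sum_{k\ge N}|Q_k(z)|\ \le\ (M_1+\cdots+M_{N-1})+2^{1-N}\ \le\ W(r_{N-1})\ \le\ w(z),
\]
giving the desired inequality.

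The main technical obstacle is circularity: the inequality $W(r_n)\ge M_1+\cdots+M_n+2$ defining $r_n$ depends on $M_n=\|Q_n\|_{\overline{\mathbb{D}}}$, yet $r_n$ is an input of the Mergelyan step at stage $n$ that produces $Q_n$. I plan to resolve this by a two-scale construction, decoupling the ``Mergelyan radius'' $r_n^\ast\uparrow 1$ (prescribed in advance, taken fast enough that $W(r_n^\ast)$ dominates the running sums that will arise) from a separate ``analysis radius'' $\widetilde r_n\le r_n^\ast$ that determines the shells and is chosen \emph{after} $Q_n$ has been produced and $M_n$ has been computed, using the now-known value of $M_n$ to ensure $W(\widetilde r_n)\ge M_1+\cdots+M_n+2$. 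The a priori estimate needed to schedule the $r_n^\ast$ comes from the Bernstein--Walsh lemma, which bounds $M_n$ by $(\|\pi_n-S_{n-1}\|_{\Lambda_n}+2^{-n})\,(1/r_n^\ast)^{\deg Q_n}$ in terms of data that are known before stage $n$. The ``in particular'' assertion follows at once on taking $w=e^h$, which is continuous on $\mathbb{D}$, exceeds $1$ there, and tends to $\infty$ at $1$.
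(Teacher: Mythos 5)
Your overall architecture — a Mergelyan--gap construction producing $f=\sum Q_n$ with disjoint Taylor supports and an enumeration of pairs (polynomial, admissible compactum) — matches the paper's, but you diverge at the crucial point of how $|f|\le w$ on $A$ is enforced, and there the argument has a genuine gap. You try to control $\|Q_n\|_{\overline{\mathbb{D}}}$ a posteriori and absorb the resulting partial sums into $w$ via the shell decomposition; you correctly flag the circularity that $M_n=\|Q_n\|_{\overline{\mathbb{D}}}$ depends on the Mergelyan radius $r_n^\ast$ while $r_n^\ast$ must be scheduled so that $W(r_n^\ast)$ dominates $M_1+\cdots+M_n$. But the proposed escape does not work: the Bernstein--Walsh bound
\[
M_n\ \le\ \bigl(\|\pi_n-S_{n-1}\|_{\Lambda_n}+2^{-n}\bigr)\,(1/r_n^\ast)^{\deg Q_n}
\]
involves $\deg Q_n$, which is \emph{not} known before stage $n$ — it is an output of the Mergelyan step, and as $r_n^\ast\uparrow 1$ the gap between $\overline{D}(0,r_n^\ast)$ and $\Lambda_n$ shrinks, so neither $\deg Q_n$ nor $M_n$ is under a priori control. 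Pushing $r_n^\ast$ closer to $1$ increases $W(r_n^\ast)$ but may simultaneously increase $M_n$ with no guarantee of catching up, and there is no monotonicity that breaks the loop. The two-scale trick ($\widetilde r_n\le r_n^\ast$) does not help either: to have the tail estimate $\sum_{k\ge N}|Q_k(z)|\le 2^{1-N}$ on the shell you need $\widetilde r_N\le r_N^\ast$, and to have $W(\widetilde r_{N-1})$ large enough you need $W$ evaluated at a radius $\le r_{N-1}^\ast$ to exceed a quantity determined only after $r_{N-1}^\ast$ has been fixed — the same circularity.

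The paper sidesteps this entirely by putting $\overline{A}$ \emph{inside} the Mergelyan compact set. At step $k$ the compactum is $\overline{A}\cup\overline{D}(0,k/(k+1))\cup K_k$, the target $p_k^\ast$ is defined to be the constant $(p_k-\sum_{1}^{k-1}q_j)(1)$ on $|z|<1$ and $z^{-n_k}(p_k-\sum_{1}^{k-1}q_j)(z)$ outside, and the gap exponent $n_k$ is chosen so large that $|z^{n_k}|\cdot|(p_k-\sum_{1}^{k-1}q_j)(1)|\le 2^{-k-1}w(z)$ on $A\cup\overline{D}(0,k/(k+1))$ — this is where the hypotheses $\overline{A}\cap\mathbb{T}=\{1\}$ and $w\to\infty$ at $1$ are used: $|z|^{n_k}$ is tiny on $A$ except near $1$, where $w$ is huge. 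After multiplying the Mergelyan approximant by $z^{n_k}$, one gets $|q_k|\le 2^{-k}w$ directly \emph{on $A$}, so $\sum M_k$ never enters and no a priori degree bound is needed. To repair your write-up you should adopt this device: include $\overline{A}$ in each Mergelyan compactum (this is legitimate because $\overline{A}\cup\overline{D}(0,r)\cup K$ can be arranged to have connected complement, using $\overline{A}\cap\mathbb{T}=\{1\}$ and $K\subset\mathbb{C}\setminus\mathbb{D}$), and control $|Q_n|$ on $A$ at the construction step rather than trying to bound $\|Q_n\|_{\overline{\mathbb{D}}}$ after the fact.
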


Let $D$ be a disc contained in $\mathbb{D}$ that is internally tangent to $%
\mathbb{T}$ at the point $1$. As noted in the proof of Proposition 5.6 in 
\cite{MN}, no member of $\mathcal{U}(\mathbb{D},0)$, when restricted to $D$,
can have a limit at $1$. However, by the above proposition, there exists $f$
in $\mathcal{U}(\mathbb{D},0)$ satisfying $\left\vert f(z)\right\vert \leq
\left\vert z-1\right\vert ^{-1/2}$ on $D$, whence $(z-1)f(z)\rightarrow 0$
as $z\rightarrow 1$ in $D$. Thus the function $z\mapsto (z-1)f(z)$ does not
belong $\mathcal{U}(\mathbb{D},0)$. This answers a question of Costakis \cite%
{Cos08}, who had asked whether the property of having a universal Taylor
series is preserved under multiplication by non-constant polynomials.
Similarly, no antiderivative of this function $f$ can belong to $\mathcal{U}(%
\mathbb{D},0)$. This gives a negative answer to another question of Costakis
(private communication), about whether antiderivatives of universal Taylor
series are necessarily universal. (The corresponding question for
derivatives remains open.) Costakis has also observed that Theorem \ref{one}
above and Theorem 1.2 of \cite{BR} together show that each member of $%
\mathcal{U}(\mathbb{D},0)$ must tend to $\infty $ along some path to the
boundary.

We will prove Theorem \ref{two} in the next section and subsequently proceed
to the remaining proofs.

\section{Proof of Theorem \protect\ref{two}}

Let $D(z,r)$ denote the open disc of centre $z$ and radius $r$, let $C(K)$
denote the space of real-valued continuous functions on a compact set $K$,
and let $\widehat{\mathbb{C}}=\mathbb{C}\cup \{\infty \}$ denote the
extended complex plane. If $U\subset \widehat{\mathbb{C}}$ is open, we
denote by $G_{U}(\cdot ,\zeta )$ the Green function for $U$ with pole at $%
\zeta \in U$, and assign this function the value $0$ outside $U$.

Now let $f$ be a holomorphic function on $\mathbb{D}$ and $h$ be a positive
harmonic function on $\mathbb{D}$ such that the set $\{\left\vert
f\right\vert \geq e^{h}\}$ is minimally thin at $\zeta _{0}\in \mathbb{T}$.
We define 
\begin{equation*}
U=\left\{ z\in D(3\zeta _{0}/4,1/4):\left\vert f(z)\right\vert
<e^{h}\right\} .
\end{equation*}%
Then $U$ is open and $\overline{U}\cap \mathbb{T=\{\zeta }_{0}\}$. Also, $%
\mathbb{D}\backslash U$ is minimally thin at $\zeta _{0}$, since 
\begin{equation*}
\mathbb{D}\backslash U=\left[ \mathbb{D}\backslash D(3\zeta _{0}/4,1/4)%
\right] \cup \{\left\vert f\right\vert \geq e^{h}\}
\end{equation*}%
and the union of two sets that are minimally thin at $\zeta _{0}$ is also
minimally thin at $\zeta _{0}$. Let $\mu _{z}$ denote harmonic measure for $%
U $ and $z\in U$. For each $z\in U$ we define a modified measure $\mu
_{z}^{\ast }$ on $\partial U$ by writing 
\begin{equation*}
d\mu _{z}^{\ast }(\zeta )=\frac{\log (1/\left\vert \zeta \right\vert )}{\log
(1/\left\vert z\right\vert )}d\mu _{z}(\zeta )\text{ \ \ \ }(\zeta \in
\partial U).
\end{equation*}%
These are probability measures since the function $\zeta \mapsto \log
(1/\left\vert \zeta \right\vert )$ is harmonic on $\mathbb{C}\backslash
\{0\} $.

We will make use of some key facts about minimal thinness from \cite{AG}.
The first of these, Theorem 9.6.2, describes how the minimal thinness of $%
\mathbb{D}\backslash U$ at $\zeta _{0}$ affects the behaviour of positive
superharmonic functions $v$ on $U$ near $\zeta _{0}$. Specifically, it tells
us that, for each such $v$, there is a set $E(v)\subset \mathbb{D}$,
minimally thin at $\zeta _{0}$, and a number $l(v)\in (0,\infty ]$, such
that 
\begin{equation*}
\frac{v(z)}{\log (1/\left\vert z\right\vert )}=\frac{v(z)}{G_{\mathbb{D}%
}(z,0)}\rightarrow l(v)\text{ \ \ \ }(z\rightarrow \zeta _{0},z\in \mathbb{D}%
\backslash E(v)).
\end{equation*}

If $\phi \in C(\partial U)$, then $\left\vert \int \phi d\mu _{z}^{\ast
}\right\vert \leq \max_{\partial U}\left\vert \phi \right\vert $ for all $%
z\in U$. By considering separately the cases where $v$ is given by 
\begin{equation*}
z\mapsto \int \phi ^{\pm }(\zeta )\log (1/\left\vert \zeta \right\vert )d\mu
_{z}(\zeta ),
\end{equation*}%
we now see that there is a set $E_{\phi }\subset \mathbb{D}$, minimally thin
at $\zeta _{0}$, and a number $l_{\phi }\in \mathbb{R}$, such that 
\begin{eqnarray*}
\int \phi d\mu _{z}^{\ast } &=&\frac{1}{\log (1/\left\vert z\right\vert )}%
\int \phi (\zeta )\log (1/\left\vert \zeta \right\vert )d\mu _{z}(\zeta ) \\
&\rightarrow &l_{\phi }\text{ \ \ \ }(z\rightarrow \zeta _{0},z\in \mathbb{D}%
\backslash E_{\phi }).
\end{eqnarray*}%
Now let $(\phi _{n})$ be a dense sequence in $C(\partial U)$. Lemma 9.3.1 in 
\cite{AG} allows us to construct a set $E^{\ast }\subset \mathbb{D}$,
minimally thin at $\zeta _{0}$, and a sequence of positive numbers $(\rho
_{n})$, decreasing to $0$, such that%
\begin{equation*}
E_{\phi _{n}}\cap D(\zeta _{0},\rho _{n})\subset E^{\ast }\text{ \ \ \ }%
(n\in \mathbb{N}).
\end{equation*}%
Thus, for each $n\in \mathbb{N}$, the function $z\mapsto \left( \int \phi
_{n}d\mu _{z}^{\ast }\right) $ converges to a finite limit as\ $z\rightarrow
\zeta _{0}$ in $\mathbb{D}\backslash E^{\ast }$. It follows that the limit
measure 
\begin{equation}
\nu _{0}=\lim_{z\rightarrow \zeta _{0},z\in \mathbb{D}\backslash E^{\ast
}}\mu _{z}^{\ast }  \label{no}
\end{equation}%
exists in the sense of $w^{\ast }$-convergence of measures. (The argument we
have used in this paragraph can be regarded as a minimal fine topology\
analogue of that used in Doob \cite{Do} to construct fine harmonic measure
at an irregular boundary point of a domain.)

Clearly $\nu _{0}$ is a probability measure on $\partial U$. We will now
show that $\nu _{0}(\{\zeta _{0}\})=0$. Since $\mathbb{D}\backslash U$ is
minimally thin at $\zeta _{0}$, we can combine Theorems 9.2.7, 9.3.3(ii) and
equation (9.2.4) in \cite{AG} to see that there is a Green potential $v_{0}$
on $\mathbb{D}$ and a set $E_{0}\subset \mathbb{D}$, minimally thin at $%
\zeta _{0}$, such that 
\begin{equation*}
\frac{v_{0}(z)}{\log (1/\left\vert z\right\vert )}\rightarrow \infty \text{
\ \ \ }(z\rightarrow \zeta _{0},z\in \mathbb{D}\backslash U)
\end{equation*}%
and%
\begin{equation*}
\frac{v_{0}(z)}{\log (1/\left\vert z\right\vert )}\rightarrow 1\text{ \ \ \ }%
(z\rightarrow \zeta _{0},z\in U\backslash E_{0}).
\end{equation*}%
Let $\varepsilon >0$. Then there exists $r>0$ such that 
\begin{equation*}
v_{0}(z)>\varepsilon ^{-1}\log \left( 1/\left\vert z\right\vert \right) 
\text{ \ \ \ \ }(z\in (\mathbb{D}\backslash U)\cap D(\zeta _{0},r))
\end{equation*}%
and%
\begin{equation*}
v_{0}(z)<2\log \left( 1/\left\vert z\right\vert \right) \text{ \ \ \ \ }%
(z\in (U\backslash E_{0})\cap D(\zeta _{0},r)).
\end{equation*}%
Hence 
\begin{eqnarray*}
\mu _{z}^{\ast }(D(\zeta _{0},r)\cap \partial U) &=&\frac{1}{\log
(1/\left\vert z\right\vert )}\int_{D(\zeta _{0},r)\cap \partial U}\log
(1/\left\vert \zeta \right\vert )d\mu _{z}(\zeta ) \\
&\leq &\frac{1}{\log (1/\left\vert z\right\vert )}\int_{D(\zeta _{0},r)\cap
\partial U}\varepsilon v_{0}d\mu _{z} \\
&\leq &\varepsilon \frac{v_{0}(z)}{\log (1/\left\vert z\right\vert )} \\
&<&2\varepsilon \text{ \ \ \ }(z\in (U\backslash E_{0})\cap D(\zeta _{0},r)),
\end{eqnarray*}%
by the superharmonicity of $v_{0}$, and so $\nu _{0}(D(\zeta _{0},r)\cap
\partial U)\leq 2\varepsilon $. Since $\varepsilon >0$ was arbitrary, $\nu
_{0}(\{\zeta _{0}\})=0$ as claimed.

Let $I$ be an open arc of $\mathbb{T}$ containing $\zeta _{0}$ on which $%
(S_{N_{k}})$ is uniformly bounded, and let $\psi _{j}:\partial U\backslash
\{\zeta _{0}\}\rightarrow \mathbb{R}$ be the function given by 
\begin{equation}
\psi _{j}(z)=\left\{ 
\begin{array}{cc}
-\frac{1}{2} & (\left\vert z\right\vert <1-\frac{1}{j}) \\ 
G_{\widehat{\mathbb{C}}\backslash \overline{I}}(z,\infty )/\log
(1/\left\vert z\right\vert ) & (1>\left\vert z\right\vert \geq 1-\frac{1}{j})%
\end{array}%
\right. .  \label{psi}
\end{equation}%
It is easy to check that $G_{\widehat{\mathbb{C}}\backslash \overline{I}%
}(z,\infty )/\log (1/\left\vert z\right\vert )$ has a finite (positive)
limit as $z\rightarrow \zeta _{0}$ in $\mathbb{D}$. Thus $\psi _{j}$,
extended by this limiting value, is upper semicontinuous and bounded above
on $\partial U$. Further, $\psi _{j}\downarrow -1/2$ on $\partial
U\backslash \{\zeta _{0}\}$ as $j\rightarrow \infty $. Hence we can find $%
j_{0}\in \mathbb{N}$ such that 
\begin{equation}
\int \psi _{j_{0}}d\nu _{0}<0.  \label{neg}
\end{equation}

For each $k\in \mathbb{N}$ we define the subharmonic function 
\begin{equation}
u_{k}=\frac{1}{N_{k}}\log \left\vert S_{N_{k}}-f\right\vert \text{ \ \ \ on }%
\mathbb{D}.  \label{uk0}
\end{equation}%
Since $S_{N_{k}}-f$ has a zero of order (at least) $N_{k}$ at $0$, the
function $u_{k}(z)-\log \left\vert z\right\vert $\ is also subharmonic on $%
\mathbb{D}$. Further, $\lim \sup_{k\rightarrow \infty }u_{k}\leq 0$. Thus it
follows from the maximum principle that 
\begin{equation*}
\underset{k\rightarrow \infty }{\lim \sup }~u_{k}(z)\leq \log \left\vert
z\right\vert \text{ \ \ on }\mathbb{D}.
\end{equation*}%
Hence (see Corollary 5.7.2 in \cite{AG}) we can choose $k_{0}\in \mathbb{N}$
such that 
\begin{equation}
u_{k}(z)\leq \frac{\log \left\vert z\right\vert }{2}\text{ \ \ \ }%
(\left\vert z\right\vert \leq 1-\frac{1}{j_{0}},k\geq k_{0}).  \label{lu}
\end{equation}%
Also, by Bernstein's lemma (see Theorem 5.5.7 in \cite{Ran}),%
\begin{equation*}
\log \left\vert S_{N_{k}}\right\vert \leq N_{k}G_{\widehat{\mathbb{C}}%
\backslash \overline{I}}(\cdot ,\infty )+\log \left( \sup\nolimits_{%
\overline{I}}\left\vert S_{N_{k}}\right\vert \right) .
\end{equation*}%
We know that there exists $a\geq 1$ such that $\left\vert
S_{N_{k}}\right\vert \leq a$ on $I$ for all $k$. On $\overline{U}\cap 
\mathbb{D}$ we thus have%
\begin{eqnarray}
u_{k} &\leq &\frac{1}{N_{k}}\log \left( 2\max \left\{ \left\vert
S_{N_{k}}\right\vert ,\left\vert f\right\vert \right\} \right)  \notag \\
&\leq &\frac{1}{N_{k}}\left( \log 2+\max \left\{ N_{k}G_{\widehat{\mathbb{C}}%
\backslash \overline{I}}(\cdot ,\infty )+\log a,h\right\} \right)  \notag \\
&\leq &G_{\widehat{\mathbb{C}}\backslash \overline{I}}(\cdot ,\infty )+\frac{%
\log 2a+h}{N_{k}}.  \label{uk}
\end{eqnarray}%
Using the subharmonicity of $u_{k}-(\log 2a+h)/N_{k}$ and its upper
boundedness on $U$, and then (\ref{psi}), (\ref{lu}) and (\ref{uk}), we see
that%
\begin{eqnarray}
u_{k}(z)-\frac{\log 2a+h(z)}{N_{k}} &\leq &\int_{\mathbb{D}\cap \partial
U}\left( u_{k}-\frac{\log 2a+h}{N_{k}}\right) d\mu _{z}  \notag \\
&\leq &\int_{\partial U}\psi _{j_{0}}(\zeta )\log (1/\left\vert \zeta
\right\vert )d\mu _{z}(\zeta )  \notag \\
&=&\log (1/\left\vert z\right\vert )\int_{\partial U}\psi _{j_{0}}d\mu
_{z}^{\ast }\text{ \ \ \ }(z\in U,k\geq k_{0}).  \label{est}
\end{eqnarray}%
By (\ref{no}), the upper semicontinuity of $\psi _{j_{0}}$, and (\ref{neg}),
there exists $r_{1}\in (0,1)$ such that%
\begin{equation}
\int_{\partial U}\psi _{j_{0}}d\mu _{z}^{\ast }<0\text{ \ \ \ }(z\in U\cap
D(\zeta _{0},r_{1})\backslash E^{\ast }).  \label{neg2}
\end{equation}%
Combining (\ref{est}) and (\ref{neg2}) with (\ref{uk0}), we see that%
\begin{equation*}
e^{-h}\left\vert S_{N_{k}}-f\right\vert \leq 2a\text{ \ \ on \ }U\cap
D(\zeta _{0},r_{1})\backslash E^{\ast }\text{ \ when \ }k\geq k_{0},
\end{equation*}%
and the conclusion of Theorem \ref{two} follows on defining 
\begin{equation*}
E=(\mathbb{D}\backslash U)\cup (\mathbb{D}\backslash D(\zeta
_{0},r_{1}))\cup E^{\ast },
\end{equation*}%
which is minimally thin at $\zeta _{0}$.

\section{The remaining proofs}

\begin{proof}[Proof of Corollary \protect\ref{cor}]
Let $f\in \mathcal{U}(\mathbb{D},0)$ and suppose that, for some positive
harmonic function $h$ on $\mathbb{D}$, the set $\{\left\vert f\right\vert
\geq e^{h}\}$ is minimally thin at two distinct points $\zeta _{1},\zeta
_{2}\in \mathbb{T}$. Further, let $I$ be an open arc of $\mathbb{T}$
containing $\zeta _{1}$ and $\zeta _{2}$ such that $\overline{I}\neq \mathbb{%
T}$. In view of the Poisson integral representation of positive harmonic
functions on $\mathbb{D}$ we can easily modify $h$ to obtain another such
function $h_{1}$ that vanishes continuously on a closed subarc $I_{1}$ of $I$
lying between $\zeta _{1}$ and $\zeta _{2}$ and such that the set $%
\{\left\vert f\right\vert \geq e^{h_{1}}\}$ remains minimally thin at $\zeta
_{1},\zeta _{2}$. By universality we can find a subsequence $(S_{N_{k}})$
that is uniformly convergent to $0$ on the set $\{r\zeta :\zeta \in 
\overline{I},1\leq r\leq 2\}$. Theorem \ref{two} then tells us that there is
a set $E\subset \mathbb{D}$, which is minimally thin at both $\zeta _{1}$
and $\zeta _{2}$, such that $(e^{-h_{1}}S_{N_{k}})$ is uniformly bounded on $%
\mathbb{D}\backslash E$. By Theorem 8 of \cite{LF} we can choose line
segments $L_{1},L_{2}\subset \mathbb{D}\backslash E$ with endpoints at $%
\zeta _{1},\zeta _{2}$, respectively. Since $(S_{N_{k}})$ is locally
uniformly convergent on $\mathbb{D}$, it follows from the maximum principle
that $(\log \left\vert S_{N_{k}}\right\vert -h_{1})$ is uniformly bounded on
a domain $\Omega $ whose boundary is contained in the union of $%
L_{1},L_{2},I $ and a suitable closed line segment in $\mathbb{D}$\ joining $%
L_{1}$ to $L_{2}$. Hence $(S_{N_{k}})$ is uniformly bounded on the set $%
\omega =\{r\zeta :\zeta \in I_{1},0<r<2\}$. This leads to the conclusion
that $S_{N_{k}}\rightarrow 0$ on $\omega $ and thus $f\equiv 0$, which is a
contradiction.
\end{proof}

\bigskip

\begin{proof}[Proof of Theorem \protect\ref{one}]
Let $f\in \mathcal{U}(\mathbb{D},0)$ and suppose that the set of Plessner
points of $f$ does not have full arclength measure. We fix $\alpha >1$ and $%
0<t\leq 1$, and define%
\begin{equation*}
J_{a}=\{\zeta \in \mathbb{T}:\left\vert f\right\vert \leq a\text{ \ on \ }%
\Gamma _{\alpha }^{t}(\zeta )\}\text{ \ \ \ \ }(a>0).
\end{equation*}%
By Plessner's theorem the set $J_{a}$ will then have positive arclength
measure provided we choose $a$ large enough. Let $F=\cup _{\zeta \in
J_{a}}\Gamma _{\alpha }^{t}(\zeta )$. The set $\mathbb{D}\backslash F$ is
then minimally thin at almost every point of $J_{a}$, by Lemma 9.7.5 of \cite%
{AG} and the conformal invariance of minimal thinness. This leads to a
contradiction, in view of the Corollary \ref{cor} and the fact that $%
\left\vert f\right\vert \leq a$ on $F$.
\end{proof}

\bigskip

\begin{proof}[Proof of Corollary \protect\ref{radial}]
Let $f\in \mathcal{U}(\mathbb{D},0)$, and let $Z$ denote the set of all $%
\zeta \in \mathbb{T}$\textit{\ }such that $\{f(r\zeta ):0<r<1\}$\ is dense in%
\textit{\ }$\mathbb{C}$. If $\zeta \in \mathbb{T}\backslash Z$, then we can
choose $p\in \mathbb{Q}+i\mathbb{Q}$ and a positive rational number $q$ such
that 
\begin{equation}
\{f(r\zeta ):0<r<1\}\subset \mathbb{C}\backslash D(p,q).  \label{inc}
\end{equation}%
We write $E_{p,q}$ for the collection of all points $\zeta \in \mathbb{T}$
satisfying (\ref{inc}). Thus $E_{p,q}$ is closed and $\mathbb{T}\backslash
Z=\cup _{p,q}E_{p,q}$. If $Z$ were not residual, then there would exist $p,q$
as above such that $E_{p,q}$ has non-empty interior $J$ relative to $\mathbb{%
T}$. It follows that $f$ does not take values in $D(p,q)$ on the sector $%
\{r\zeta :0<r<1,\zeta \in J\}$. This contradicts the conclusion of Theorem %
\ref{one}, so $Z$ must be residual.
\end{proof}

\bigskip

\begin{proof}[Proof of Proposition \protect\ref{ex}]
Without loss of generality we may assume that $A$ is closed relative to $%
\mathbb{D}$ and that $\overline{A}\cup \overline{D}(0,n/(n+1))$ has
connected complement for each $n\in \mathbb{N}$. By Lemma 2.1 of \cite{Ne96}
there is a countable collection $\mathcal{K}$, of compact sets $K\subset 
\mathbb{C}\backslash \mathbb{D}$ with connected complement, having the
following property: if $L\subset \mathbb{C}\backslash \mathbb{D}$ is compact
and $\mathbb{C}\backslash L$ is connected, then $L\subseteq K$ for some $%
K\in \mathcal{K}$. It is easy to see that $\overline{A}\cup \overline{D}%
(0,n/(n+1))\cup K$ has connected complement for every $K\in \mathcal{K}$.\
Now let $\mathcal{P}$ be the collection of all complex polynomials with
coefficients in $\mathbb{Q}+i\mathbb{Q}$, let $((K_{n},p_{n}))$ be an
enumeration of $\mathcal{K}\times \mathcal{P}$, and let $d_{n}=\max_{z\in
K_{n}}\left\vert z\right\vert $. We inductively define a sequence of
polynomials $(q_{n})$ as follows.

Since $w(z)\rightarrow \infty $ as $z\rightarrow 1$ we can choose $n_{1}\in 
\mathbb{N}$ large enough so that $\left\vert z^{n_{1}}p_{1}(1)\right\vert
\leq w(z)/2^{2}$ on $A\cup \overline{D}(0,1/2)$. We then define 
\begin{equation*}
p_{1}^{\ast }(z)=\left\{ 
\begin{array}{cc}
z^{-n_{1}}p_{1}(z) & \text{if }\left\vert z\right\vert \geq 1 \\ 
p_{1}(1) & \text{if }\left\vert z\right\vert <1%
\end{array}%
\right. ,
\end{equation*}%
use Mergelyan's theorem to choose a polynomial $q_{1}^{\ast }$ such that 
\begin{equation*}
\left\vert q_{1}^{\ast }-p_{1}^{\ast }\right\vert <(2^{2}d_{1}^{n_{1}})^{-1}%
\text{ \ \ \ on \ }\overline{A}\cup \overline{D}(0,1/2)\cup K_{1},
\end{equation*}%
and define $q_{1}(z)=z^{n_{1}}q_{1}^{\ast }(z)$. Since $w\geq 1$ and $%
d_{1}\geq 1$, we have 
\begin{eqnarray*}
\left\vert q_{1}(z)\right\vert &\leq &\left\vert z^{n_{1}}\right\vert
\left\vert q_{1}^{\ast }(z)-p_{1}^{\ast }(z)\right\vert +\left\vert
z^{n_{1}}p_{1}(1)\right\vert \\
&\leq &2^{-2}+2^{-2}w(z) \\
&\leq &2^{-1}w(z)\text{ \ \ \ \ \ \ \ \ \ }(z\in A\cup \overline{D}(0,1/2))
\end{eqnarray*}%
and 
\begin{equation*}
\left\vert p_{1}(z)-q_{1}(z)\right\vert =\left\vert z^{n_{1}}\right\vert
\left\vert q_{1}^{\ast }(z)-p_{1}^{\ast }(z)\right\vert \leq 2^{-2}\text{ \
\ \ }(z\in K_{1}).
\end{equation*}

Next, given $q_{1},...,q_{k-1}$, where $k\geq 2$, we choose $n_{k}>\deg
q_{k-1}$ large enough such that 
\begin{equation*}
\left\vert z^{n_{k}}\left( p_{k}-\tsum_{1}^{k-1}q_{j}\right) (1)\right\vert
\leq 2^{-k-1}w(z)\text{ \ on \ }A\cup \overline{D}(0,k/(k+1)),
\end{equation*}%
define 
\begin{equation*}
p_{k}^{\ast }(z)=\left\{ 
\begin{array}{cc}
z^{-n_{k}}(p_{k}-\tsum_{1}^{k-1}q_{j})(z) & \text{if }\left\vert
z\right\vert \geq 1 \\ 
(p_{k}-\tsum_{1}^{k-1}q_{j})(1) & \text{if }\left\vert z\right\vert <1%
\end{array}%
\right. ,
\end{equation*}%
use Mergelyan's theorem to choose a polynomial $q_{k}^{\ast }$ such that 
\begin{equation*}
\left\vert q_{k}^{\ast }-p_{k}^{\ast }\right\vert
<(2^{k+1}d_{k}^{n_{k}})^{-1}\text{ \ on \ }\overline{A}\cup \overline{D}%
(0,k/(k+1))\cup K_{k},
\end{equation*}%
and define $q_{k}(z)=z^{n_{k}}q_{k}^{\ast }(z)$. Thus 
\begin{eqnarray*}
\left\vert q_{k}(z)\right\vert &\leq &\left\vert z^{n_{k}}\right\vert
\left\vert q_{k}^{\ast }(z)-p_{k}^{\ast }(z)\right\vert +\left\vert
z^{n_{k}}p_{k}^{\ast }(z)\right\vert \\
&\leq &2^{-k-1}+\left\vert z^{n_{k}}\left( p_{k}-\sum_{1}^{k-1}q_{j}\right)
(1)\right\vert \\
&\leq &2^{-k-1}+2^{-k-1}w(z) \\
&\leq &2^{-k}w(z)\text{ \ \ \ \ \ \ \ \ \ }(z\in \overline{A}\cup \overline{D%
}(0,k/(k+1)))
\end{eqnarray*}%
and 
\begin{equation*}
\left\vert p_{k}(z)-\sum_{1}^{k}q_{j}(z)\right\vert =\left\vert
z^{n_{k}}\right\vert \left\vert q_{k}^{\ast }(z)-p_{k}^{\ast }(z)\right\vert
\leq 2^{-k-1}\text{ \ \ }(z\in K_{k}).
\end{equation*}%
It is now easy to see that the series $\sum q_{n}$ converges locally
uniformly on $\mathbb{D}$ to a holomorphic function $f$ such that $%
\left\vert f\right\vert \leq w$ on $A$, and that 
\begin{equation*}
\left\vert p_{k}-S_{n_{k+1}-1}\right\vert =\left\vert
p_{k}-\sum_{1}^{k}q_{j}\right\vert \leq 2^{-k-1}\text{ \ on \ }K_{k}\text{ \
\ \ }(k\in \mathbb{N}).
\end{equation*}%
Thus $f\in \mathcal{U}(\mathbb{D},0)$, as claimed.
\end{proof}

\bigskip

A related result for the strip $S$, given below, will be used in the proof
of Theorem \ref{con}.

\begin{proposition}
\label{ex2}Let $A$ be a bounded subset of $S$ such that $\overline{A}\cap
\partial S=\{\pm 1\}$, and let $w:S\rightarrow (1,\infty )$ be a continuous
function such that $w(z)\rightarrow \infty $ as $z\rightarrow \pm 1$. Then
there exists $f\in \mathcal{U}(S,0)$ such that $\left\vert f\right\vert \leq
w$ on $A$. In particular, this is true for $w=e^{h}$, where $h$ is any
positive harmonic function on $S$ that tends to $\infty $ at $\pm 1$.
\end{proposition}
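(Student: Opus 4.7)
The plan is to mimic the inductive Mergelyan construction from the proof of Proposition \ref{ex}, adapted to $S$ and its two boundary points $\pm 1$. First I would assume WLOG that $A$ is closed in $S$ and fix a compact exhaustion $E_n = \{z \in S : |\operatorname{Re} z| \leq 1 - 1/n, \ |\operatorname{Im} z| \leq n\}$ of $S$, arranged so that $\overline{A} \cup E_n \cup K$ has connected complement for every $K$ in a countable family $\mathcal{K}$ of compacta in $\mathbb{C} \setminus S$ with connected complement (built as in Lemma 2.1 of \cite{Ne96}). Then enumerate $\mathcal{K} \times \mathcal{P}$ as $((K_k, p_k))$, where $\mathcal{P}$ is the set of polynomials with coefficients in $\mathbb{Q} + i\mathbb{Q}$.

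Next I would inductively construct polynomials $q_k(z) = z^{n_k} q_k^\ast(z)$ with $n_k > \deg q_{k-1}$ satisfying
\[
|q_k(z)| \leq 2^{-k} w(z) \quad (z \in \overline{A} \cup E_k), \qquad \Bigl|p_k(z) - \sum_{j=1}^{k} q_j(z)\Bigr| \leq 2^{-k-1} \quad (z \in K_k).
\]
Writing $h_k = p_k - \sum_{j<k} q_j$, the polynomial $q_k^\ast$ would be obtained from Mergelyan's theorem on $\overline{A} \cup E_k \cup K_k$ applied to the continuous function equal to $z^{-n_k} h_k$ on $K_k$ and to an auxiliary polynomial $L_k$ on the strip side, with $L_k(\pm 1) = (\pm 1)^{-n_k} h_k(\pm 1)$ (taking $n_k$ even simplifies the signs) to guarantee continuity at the only two points where the pieces can meet. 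Given $(q_k)$, the conclusion that $f = \sum q_k$ lies in $\mathcal{U}(S,0)$ with $|f| \leq w$ on $A$ follows exactly as in Proposition \ref{ex}, and taking $w = e^h$ then gives the second assertion.

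The main technical obstacle, absent in the disc setting, is that $\overline{A}$ may contain points with $|z| > 1$: then $|z|^{n_k}$ grows rather than decays as $n_k \to \infty$, and taking $L_k$ to be merely a linear interpolant (as in the disc analogue, where $|z|^{n_k} \leq 1$ on $\overline{\mathbb{D}}$ makes this automatic) leaves $|z^{n_k} L_k(z)|$ unbounded on the bulk of $\overline{A}$. I would address this by a preliminary Mergelyan step, constructing $L_k$ as a polynomial with $L_k(\pm 1) = (\pm 1)^{-n_k} h_k(\pm 1)$ and $|L_k|$ as small as needed on the compactum $(\overline{A} \cup E_k) \setminus (D(1, \delta_k) \cup D(-1, \delta_k))$, which is separated from $\pm 1$ for small $\delta_k > 0$; a small linear correction makes the interpolation at $\pm 1$ exact. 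Then $|z^{n_k} L_k(z)|$ is absorbed by $w$ everywhere on $\overline{A} \cup E_k$: near $\pm 1$ because $w \to \infty$ while $|z|^{n_k}$ stays bounded, and on the separated compactum because $|L_k|$ is small. The price is a larger $\deg L_k$, which only forces $n_{k+1}$ to be correspondingly larger and does not affect the structure of the induction.
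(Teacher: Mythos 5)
There is a gap in your treatment of the auxiliary polynomial $L_k$. The preliminary Mergelyan step you describe controls $|L_k|$ only on the truncated compactum $C=(\overline{A}\cup E_k)\setminus(D(1,\delta_k)\cup D(-1,\delta_k))$ and pins down the two values $L_k(\pm 1)$; it says nothing about $|L_k|$ on the intermediate region $(\overline{A}\cup E_k)\cap(D(1,\delta_k)\cup D(-1,\delta_k))\setminus\{\pm 1\}$, where a polynomial that is small on $C$ and interpolates at $\pm 1$ may be arbitrarily large. Your claim that near $\pm 1$ the product $|z^{n_k}L_k|$ ``is absorbed by $w$ because $w\to\infty$ while $|z|^{n_k}$ stays bounded'' therefore tacitly uses a bound $M_k:=\sup_{\overline{A}\cup E_k}|L_k|$ satisfying $M_k(1+\delta_k)^{n_k}\leq 2^{-k}\inf_{\overline{A}\cap D(\pm 1,\delta_k)}w$; but $M_k$ is only available once $L_k$ has been constructed, which requires $\delta_k$ to have been fixed first, and your argument does not break this circularity.

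The obvious repair --- applying Mergelyan on all of $\overline{A}\cup E_k$ to a continuous target $\phi$ that already satisfies $|z^{n_k}\phi(z)|\leq 2^{-k-1}w(z)$ there --- is blocked in precisely the case the proposition is used for: in the proof of Theorem \ref{con} the set $A$ is chosen so that $S\setminus A$ is minimally thin at $\pm 1$, so $\overline{A}$ has nonempty interior abutting $\pm 1$. Such a $\phi$ would need to be holomorphic on that interior, take the prescribed nonzero value at $\pm 1$, yet be very small away from $\pm 1$; by the identity theorem it cannot simply vanish off a neighbourhood of $\pm 1$, so one needs a genuine holomorphic transition function whose size on the strip is known in advance. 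That is exactly what the paper supplies: it replaces $z^{n_k}$ by $\{F_+(z)\}^{n_k}$ and $\{F_-(z)\}^{n_k}$, where $F_+$ and $F_-$ map the half-planes $\{\operatorname{Re}z<1\}$ and $\{\operatorname{Re}z>-1\}$ conformally onto $\mathbb{D}$ with boundary value $1$ at $1$ and at $-1$ respectively. These powers are bounded by $1$ on $\overline{S}$ and tend to $0$ locally uniformly in $S$ as $n_k\to\infty$, which is precisely the control that $z^{n_k}L_k$ lacks. (Your other modification --- forcing $q_k=z^{n_k}q_k^\ast$ with $n_k>\deg q_{k-1}$ so that $S_{n_{k+1}-1}(f,0)=\sum_{1}^{k}q_j$, in place of the paper's Cauchy-estimate control of $S_{m_{k-1}}$ --- is a workable alternative, but it is secondary to the missing transition device.)
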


\begin{proof}
Let $F_{+}:\{\func{Re}z<1\}\rightarrow \mathbb{D}$ be a conformal map such
that $F_{+}(0)=0$ and with boundary limit $F_{+}(1)=1$, and let $%
F_{-}(z)=F_{+}(-z)$. Thus $F_{-}$ is a conformal map from $\{\func{Re}z>-1\}$
to $\mathbb{D}$ and $F_{-}(-1)=1$. We exhaust $S$ by the rectangles%
\begin{equation*}
R_{n}=\{\left\vert \func{Re}z\right\vert \leq n/(n+1),\text{ }\left\vert 
\func{Im}z\right\vert \leq n\}\text{ \ \ \ \ \ \ \ }(n\in \mathbb{N)}.
\end{equation*}%
We may assume that $A$ is closed relative to $S$ and that $\overline{A}\cup
R_{n}$ has connected complement for each $n$. Let $w_{n}=\max_{R_{n}}w$.
Also, let $K_{n}$ and $p_{n}$ be as in the proof of Proposition \ref{ex},
except that the sets $K_{n}$ now lie outside $S$ rather than $\mathbb{D}$.

We inductively define a sequence of polynomials $(q_{n})$ as follows. Given $%
k\in \mathbb{N}$ and $q_{1},...,q_{k-1}$, let $m_{k-1}$ denote the degree of 
$\sum_{1}^{k-1}q_{j}$. (We define $m_{0}=0$.) By Cauchy's estimates we can
choose $\delta _{k}\in (0,1)$ small enough such that, if $g$ is holomorphic
on $\mathbb{D}$ and $\left\vert g\right\vert <\delta _{k}$ on $\overline{D}%
(0,1/2)$, then 
\begin{equation}
\left\vert S_{N}(g,0)\right\vert \leq 2^{-k}\text{ \ \ \ on \ }K_{1}\cup
...\cup K_{k}\text{ \ \ \ }(N=0,1,...,m_{k-1}).  \label{O}
\end{equation}%
Since $\left\vert F_{\pm }\right\vert <1$ and $w(z)\rightarrow \infty $ as $%
z\rightarrow \pm 1$, we can choose $n_{k}\in \mathbb{N}$ large enough so
that 
\begin{equation}
\left\vert F_{\pm }(z)\right\vert ^{n_{k}}\left\vert \left(
p_{k}-\sum_{1}^{k-1}q_{j}\right) (\pm 1)\right\vert \leq 2^{-k-2}\delta _{k}%
\frac{w(z)}{w_{k}}\text{ \ \ \ \ }(z\in A\cup R_{k})  \label{st}
\end{equation}%
and also that $b_{k},c_{k}\in D(0,2^{-k-1}\delta _{k})$, where%
\begin{equation*}
b_{k}=\left\{ F_{-}(1)\right\} ^{n_{k}}\left(
p_{k}-\sum_{1}^{k-1}q_{j}\right) (-1),\text{ \ \ }c_{k}=\left\{
F_{+}(-1)\right\} ^{n_{k}}\left( p_{k}-\sum_{1}^{k-1}q_{j}\right) (1).
\end{equation*}%
The function $p_{k}^{\ast }$ defined by 
\begin{equation*}
p_{k}^{\ast }(z)=\left\{ 
\begin{array}{cc}
p_{k}(z)-\sum_{1}^{k-1}q_{j}(z)+b_{k} & (\func{Re}z\geq 1) \\ 
\begin{array}{c}
\left\{ F_{+}(z)\right\} ^{n_{k}}\left( p_{k}-\sum_{1}^{k-1}q_{j}\right) (1)
\\ 
\;\;\;\;\;\;\;+\left\{ F_{-}(z)\right\} ^{n_{k}}\left(
p_{k}-\sum_{1}^{k-1}q_{j}\right) (-1)%
\end{array}
& (\left\vert \func{Re}z\right\vert <1) \\ 
p_{k}(z)-\sum_{1}^{k-1}q_{j}(z)+c_{k} & (\func{Re}z\leq -1)%
\end{array}%
\right.
\end{equation*}%
is continuous at $\pm 1$ and holomorphic outside $\{\left\vert \func{Re}%
z\right\vert =1\}$, so by Mergelyan's theorem we can choose a polynomial $%
q_{k}$ such that 
\begin{equation*}
\left\vert q_{k}-p_{k}^{\ast }\right\vert <2^{-k-1}\delta _{k}\,\ \ \ \ \ 
\text{\ on \ }\overline{A}\cup R_{k}\cup K_{k}.
\end{equation*}%
In view of (\ref{st}),%
\begin{equation*}
\left\vert q_{k}\right\vert \leq \left\vert q_{k}-p_{k}^{\ast }\right\vert
+\left\vert p_{k}^{\ast }\right\vert \leq 2^{-k-1}\delta _{k}+2^{-k-1}\delta
_{k}\frac{w(z)}{w_{k}}\text{ \ \ \ \ \ \ \ on }A\cup R_{k},
\end{equation*}%
so%
\begin{equation*}
\left\vert q_{k}\right\vert \leq 2^{-k}w\text{ \ on \ }A,\text{ \ \ \ \ \ \
\ }\left\vert q_{k}\right\vert \leq 2^{-k}\delta _{k}\text{ \ on \ }R_{k}.
\end{equation*}%
Also,%
\begin{eqnarray}
\left\vert p_{k}-\sum_{1}^{k}q_{j}\right\vert &\leq &\left\vert
q_{k}-p_{k}^{\ast }\right\vert +\left\vert p_{k}^{\ast }-\left(
p_{k}-\sum_{1}^{k-1}q_{j}\right) \right\vert  \notag \\
&\leq &2^{-k-1}\delta _{k}+\max \{\left\vert b_{k}\right\vert ,\left\vert
c_{k}\right\vert \}\leq 2^{-k}\text{ \ \ \ on \ }K_{k}.  \label{xx}
\end{eqnarray}%
We can clearly also arrange that the sequence $(\delta _{k})$ is decreasing.

It follows that the series $\sum q_{k}$ converges locally uniformly on $S$
to a holomorphic function $f$ satisfying $\left\vert f\right\vert \leq w$ on 
$A$. Further, $\left\vert \sum_{k}^{\infty }q_{j}\right\vert <\delta _{k}$
on $R_{k}$, which contains $\overline{D}(0,1/2)$, so%
\begin{equation*}
\left\vert p_{k}-S_{m_{k-1}}(f,0)\right\vert \leq \left\vert
p_{k}-\sum_{1}^{k-1}q_{j}\right\vert +\left\vert
S_{m_{k-1}}(\sum_{k}^{\infty }q_{j},0)\right\vert \leq 2^{1-k}\text{ \ \ on
\ }K_{k},
\end{equation*}%
by (\ref{xx}) and (\ref{O}). Thus $f\in \mathcal{U}(S,0)$, as required.
\end{proof}

\bigskip

\begin{proof}[Proof of Theorem \protect\ref{con}]
The notion of minimal thinness at a boundary point of $S$ is defined in the
same way as for $\mathbb{D}$, except that we now use the Poisson kernel for $%
S$. Let $h$ be a positive harmonic function on $S$ such that $%
h(z)\rightarrow \infty $ as $z\rightarrow \pm 1$, let $w=e^{h}$, and let $A$
be a bounded, relatively closed subset of $S$ such that $\overline{A}\cap
\partial S=\{\pm 1\}$ and $S\backslash A$ is minimally thin at $\pm 1$.
Next, let $f\in \mathcal{U}(S,0)$ be as in Proposition \ref{ex2}.

Part (i) of Theorem \ref{con} now follows from Corollary \ref{cor}, and the
conformal invariance of harmonicity and minimal thinness.

To prove part (ii) we choose a conformal mapping $F:S\rightarrow S$ which
sends two (distinct) points of $\{\func{Re}z=1\}$ to the boundary points $%
\pm 1$. The argument used to prove Corollary \ref{cor} is readily adapted to
show that, if $f_{1}\in \mathcal{U}(S,0)$ and $h$ is a positive harmonic
function on $S$, then the set $\{\left\vert f_{1}\right\vert \geq e^{h}\}$
cannot be minimally thin at more than one point of $\{\func{Re}z=1\}$. Part
(ii) now follows again from the conformal invariance of harmonicity and
minimal thinness.
\end{proof}

\bigskip\

\bigskip

\noindent School of Mathematical Sciences,

\noindent University College Dublin,

\noindent Belfield, Dublin 4, Ireland.

\noindent e-mail: stephen.gardiner@ucd.ie

\end{document}